\documentclass{article}

\usepackage[all]{xy}
 \usepackage[usenames,dvipsnames]{pstricks}
 \usepackage{epsfig}
 \usepackage{pst-grad} % For gradients
 \usepackage{pst-plot} % For axes
\usepackage{ebezier}

\usepackage[
        colorlinks, citecolor=red,
        backref,
        pdfauthor={MDJ,JK},
        pdftitle={S1}
]{hyperref}

\usepackage{latexsym}\usepackage{amsfonts}\usepackage{amssymb}\usepackage{amsthm}
\usepackage{amsmath}\usepackage{newlfont}\usepackage{graphicx}\usepackage{doc}
\usepackage{color}\usepackage{multicol}\usepackage{epic}\usepackage{eepic}
\usepackage{ebezier}
\newtheorem{thrm}{Theorem}

\newtheorem{cor}[thrm]{Corollary}

\newtheorem{rem}[thrm]{Remark}
\newtheorem{prop}[thrm]{Proposition}
\newtheorem*{theorem-non}{Theorem}

\newtheorem*{namedthm}{\namedthmname}
\newcounter{namedthm}



\include{epsf}

\def \Dj{\mbox{\raise0.3ex\hbox{-}\kern-0.4em D}}

\begin{document}

\title{Polynomial entropy on regular curves}

\author{Ma\v{s}a \Dj ori\'c\thanks{Corresponding author: Ma\v{s}a \Dj ori\'c.}\\
Matemati\v{c}ki institut SANU\\
Knez Mihailova 36\\
11000 Beograd\\
Serbia\\
masha@mi.sanu.ac.rs\\ \and
Jelena Kati\'c
\thanks{This work is partially supported by the Ministry of Education, Science and Technological Developments of Republic of Serbia: grant number 451-03-47/2023-01/200104 with Faculty of Mathematics, University of Belgrade.} \\
Matemati\v cki fakultet\\
Studentski trg 16\\
11000 Beograd\\
Serbia\\
jelena.katic@matf.bg.ac.rs
}

\maketitle

\begin{abstract} We show that the polynomial entropy of homeomorphisms on regular curves is bounded above by one. Moreover, the polynomial entropy equals one under the fairly mild condition that the homeomorphism possesses a wandering point. We obtain a rigidity result for homeomorphisms on local dendrites (and therefore on graphs and dendrites): their polynomial entropy is either zero or one.
\end{abstract}

\medskip

{\it 2020 Mathematical  subject classification:} Primary 37E25, Secondary 54F15, 37B40 \\
{\it Keywords:}  Polynomial entropy, regular curves, local dendrites, dendrites, graphs, wandering dynamics, hyperspaces, lap number
\section{Introduction}

One useful measure of complexity for systems with zero topological entropy is polynomial entropy. The very definitions of topological and polynomial entropy differ in that the former quantifies the overall {\it exponential} growth of orbit complexity, while the latter captures the growth rate on a {\it polynomial} scale. Polynomial entropy serves as a refined tool to distinguish among systems with zero topological entropy. For instance, consider a rotation on the circle versus a homeomorphism with both periodic and wandering points. Although both have topological entropy equal to zero, the first system is evidently simpler than the second. Labrousse proved that polynomial entropy can distinguish between such systems (see Theorem 1 in~\cite{L}).

Topological and polynomial entropy share several properties: both are conjugacy invariants, they depend only on the topology (not on the specific choice of metric), they satisfy the finite union property, and they both obey a product formula. However, there are important differences as well. Certain properties such as the power formula, the $\sigma$-union property, and the variational principle hold for topological entropy but do not necessarily hold for polynomial entropy (see~\cite{M1,M2}). Moreover, topological entropy is equal to the entropy of the system restricted to the non-wandering set, which is a closed, invariant subset. This is not the case for polynomial entropy. Unlike topological entropy, which captures only the behavior on the non-wandering set, polynomial entropy is sensitive to the wandering part of the system.

Polynomial entropy of homeomorphisms on the interval $[0,1]$ and circle $\mathbb{S}^1$ have been computed, as well as polynomial entropy on some dendrites. In this paper, we obtain an upper bound for polynomial entropy on regular curves, a class of one dimensional Peano continuums which contains dendrites, graphs and more generally local dendrites. 

Let $\varphi(f,n)=\sup\limits_{x\in X}\{|\mathrm{Comp}(f^{-n}(x))|\}.$ Then it holds:

\begin{theorem-non}
   Let $f:X\to X$ be a continuous on a regular curve $X$. Then $$h_{\mathrm{pol}}(f)\leqslant1+\limsup_{n\to\infty}\dfrac{\log{\varphi(f,n)}}{\log{n}}.$$
\end{theorem-non}

We apply this theorem to the $n$-fold symmetric product $F_n(X)$ and $2^X$. We also derive an upper bound for polynomial entropy on the interval $[0,1]$, using the lap number, an estimate already proved by Gomes and Carneiro in~\cite{GC}.

Roth, Roth and Snoha obtained a flexibility result concerning polynomial entropy on dendrites: they constructed a dendrite and surjective continuous map on it, with the given polynomial entropy $\alpha\in (2,3)$ (see~\cite{RRS}). Here we obtain a rigidity result, as a generalisation of a result of Labrousse~\cite{L} for interval and circle homeomorphisms.

\begin{theorem-non}
 Let $X$ be a local dendrite and $f:X\to X$ a homeomorphism. Then $h_{\mathrm{pol}}(f)\in\{0,1\}$. Moreover $h_{\mathrm{pol}}(f)=1$ if and only if $f$ possesses a wandering point.
\end{theorem-non}

\section{Preliminaries}

\subsection{Regular curves}\label{subsec:reg}

We say $D$ is a \textit{dendrite} if $D$ is a locally connected continuum, i.e. a nonempty connected compact metric space, which contains no simple closed curves. A point $x\in D$ is an \textit{end point} of $D$ if $D\backslash\{x\}$ is connected. $x\in D$ is a \textit{cut point} of $D$ if $D\backslash\{x\}$ is not connected. We denote the number of components of $D\backslash\{x\}$ by $\mathrm{ord}(x)$. If the order of $x$ is equal to $2$, then $x$ is an \textit{ordinary point} of $D$. If the order of $x$ is greater than $2$, then $x$ is a \textit{branch point} of $D$. We denote by $B(D)$ the set of all branch points and by $E(D)$ the set of all endpoints of $D$. Some basic properties of dendrites and dendrite maps can be found, for example, in \cite{JB}.

A \textit{local dendrite} is a continuum such that every point has a dendrite neighbourhood. Every local dendrite is a \textit{Peano continuum} (locally connected continuum) which has a finite number of circles. Dendrites and graphs are local dendrites.

Let $X$ be a continuum. We say that $X$ is a \textit{regular curve} if for each $x\in X$ and each open neighbourhood $U$ of $x$ in $X$ there exists an open neighbourhood $V$ of $x$ in $X$ such that $V\subset U$ and the boundary set $\partial V$ is finite. It is known that Peano continua are arcwise connected and locally arcwise connected (see Theorems 8.23 and 8.25 in \cite{SN}). Every regular curve is a Peano one dimensional continuum, and therefore is arcwise and locally arcwise connected. Graphs, dendrites and, more generally, local dendrites are regular curves (for more details see \cite{SN}). The proof of the following theorem can be found in \cite{GS}.

\begin{prop}
If $f:X\to X$ is a homeomorphism on a regular curve $X$, then the topological entropy of $f$ is equal to zero.
\end{prop}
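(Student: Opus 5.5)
The plan is to bound the growth of $(n,\varepsilon)$-separated sets by a subexponential quantity, using the finite-boundary neighbourhoods that define a regular curve. Fix $\varepsilon>0$. By compactness and regularity, cover $X$ by finitely many open sets $V_1,\dots,V_k$ of diameter less than $\varepsilon$, each with finite boundary. Let $P=\bigcup_i \partial V_i$; this is a finite set. The components of $X\setminus P$ then form a partition of $X$ up to a finite set, and each component lies in some $V_i$, so it has diameter less than $\varepsilon$. There are only finitely many components that matter, since every component of $X\setminus P$ that meets $V_i$ and is not contained in it must have a point of $\partial V_i$ in its closure. To keep the count honest I would use the sets $V_i$ themselves (or their finite Boolean combinations) as the partition pieces. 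The resulting partition $\mathcal{A}$ has finitely many elements, each of small diameter, and the boundaries of its elements lie in the finite set $P$.

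The key step is to refine $\mathcal{A}$ dynamically, $\mathcal{A}_n=\bigvee_{j=0}^{n-1} f^{-j}\mathcal{A}$, and count its pieces. Because $f$ is a homeomorphism, the boundaries of the pieces of $\mathcal{A}_n$ lie in $P_n=\bigcup_{j<n} f^{-j}(P)$, a finite set with at most $n|P|$ points. I then need a lemma for regular curves: if $Q$ is a finite set of $m$ points and $U$ is an open connected set with finite boundary, then $U\setminus Q$ has at most $C(U)+c\,m$ components that meet a given region nontrivially. I expect this to be the main obstacle, because removing a point from a regular curve can create infinitely many components (think of branch points of infinite order in dendrites). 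My first attempt would be to count only components of diameter at least $\varepsilon$, or more precisely those not contained in a single element of $\mathcal{A}$. Each such component must reach the boundary of some $V_i$, which is finite. A standard argument (infinitely many disjoint connected sets meeting both a point of $Q$ and a fixed closed set at distance at least $\delta$ would contradict local connectedness) then shows there are only finitely many, with a count linear in $m$ given a uniform bound.

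With such a linear bound in hand, the number of $\varepsilon$-essential pieces of $\mathcal{A}_n$ grows at most polynomially in $n$. A set that is $(n,2\varepsilon)$-separated can have at most one point in each piece, up to the small components, which can be grouped together because they have diameter less than $\varepsilon$ along every iterate. So $s(n,\varepsilon)\le p(n)$ for a polynomial $p$, and $\frac1n\log s(n,\varepsilon)\to 0$. Letting $\varepsilon\to0$ gives $h_{\mathrm{top}}(f)=0$.

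If the component counting proves too delicate, a fallback is to cite the known result in the literature that regular curves admit no homeomorphisms of positive entropy, via Seidler's approach. In that approach, a horseshoe would force the existence of a Cantor set of arcs crossing a finite boundary, which is impossible.
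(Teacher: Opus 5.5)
The paper does not prove this proposition; it only cites Seidler \cite{GS}, so your fallback is what the paper does. Your direct argument, however, has a genuine gap at the step you flagged, and your proposed workaround does not close it. Every component $C$ of $X\setminus P_n$ already lies in a single element of $\mathcal{A}_n$. Indeed, for $j<n$ the set $f^j(C)$ is a connected subset of $X\setminus P$, and a connected set missing $\partial V_i$ is either contained in $V_i$ or disjoint from it. So the components you propose to count (those of diameter at least $\varepsilon$, or those not contained in a single element of $\mathcal{A}$) form an empty class. Your lemma about $U\setminus Q$ therefore says nothing about the quantity that matters: the number of distinct pieces of $\mathcal{A}_n$, i.e.\ the number of distinct itineraries realized by the possibly infinitely many components of $X\setminus P_n$. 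Nor can you dispose of the small components by ``grouping them together''. A union of sets each of small $d^f_n$-diameter need not have small $d^f_n$-diameter, so a grouping needs an anchor. As written, no subexponential bound on $\mathrm{sep}(n,\varepsilon)$ is obtained.

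The missing idea is to anchor each component at a point of $P_n$ and use $P_n$ itself as the spanning set, so components never need to be counted. For $\varepsilon<\mathrm{diam}\,X$ we have $P\neq\emptyset$. Since $X$ is a locally connected continuum, each component $C$ of the open set $X\setminus P_n$ is open and not closed, and $\overline{C}\setminus C\subset P_n$. Pick $p\in\overline{C}\cap P_n$. For each $j<n$ the set $f^j(C)$ lies in some $V_{i_j}$, so $f^j(p)\in\overline{V_{i_j}}$ and $d(f^j(y),f^j(p))<\varepsilon$ for all $y\in C$. Hence $X$ is covered by the $d^f_n$-balls of radius $\varepsilon$ centred at the points of $P_n$, and $\mathrm{span}(n,\varepsilon)\le|P_n|\le n|P|$. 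This gives $h_{\mathrm{top}}(f)=0$, and even $h_{\mathrm{pol}}(f)\le 1$. This linear count is exactly the spanning set of cardinality $L\cdot n$ (the case $\varphi\equiv 1$) that the paper takes from Kato in the proof of Theorem~\ref{kato}. By contrast, your description of the cited result as a horseshoe forcing a Cantor set of arcs across a finite boundary is a guess, not a sketch you have justified.
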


\subsection{Polynomial entropy}\label{subsec:entropy}

Suppose that $X$ is a compact metric space, and $f:X\rightarrow X$ is continuous. Denote by $d_n^f(x,y)$ the dynamic metric (induced by $f$ and $d$):
$$
d_n^f(x,y)=\max\limits_{0\leq k\leq n-1}d(f^k(x),f^k(y)).
$$
Fix $Y\subseteq X$. For $\varepsilon>0$, we say that a finite set $E\subset X$ is \textit{$(n,\varepsilon)$-separated} if for every $x,y \in E$ it holds $d_n^f(x,y)\geq \varepsilon$. Let $\mathrm{sep}(n,\varepsilon;Y)$ denote the maximal cardinality of an $(n,\varepsilon)$-separated set $E$, contained in $Y$.

\begin{def}\label{def:pol_ent} The \textit{polynomial entropy} of the map $f$ on the set $Y$ is defined by
$$
h_{\mathrm{pol}}(f;Y)=\lim\limits_{\varepsilon \rightarrow 0}\limsup\limits_{n\rightarrow \infty}\frac{\log \mathrm{sep}(n,\varepsilon;Y)}{\log n}.
$$
\end{def}

We can also define the polynomial entropy as follows. Let $\mathrm{span}(n,\varepsilon;Y)$ denote the minimal number of balls of radius $\varepsilon$ (with respect to $d_n^f$) that cover $Y$ (the centers of the ball are not necessarily elements of $Y$). Denote by $\mathrm{cov}(n,\varepsilon;Y)$ the minimal number of sets $Y_j$ such that the diameters (with respect to $d_n^f$) of $Y_j$ are smaller than $\varepsilon$ and $Y\subseteq\cup_{j=1}^mY_j$.

From the following sequence of inequalities
$$\mathrm{span}(n,2\varepsilon;Y)\le \mathrm{cov}(n,\varepsilon;Y)\le \mathrm{sep}(n,\varepsilon;Y)\le \mathrm{cov}(n,\varepsilon/2;Y)\le \mathrm{span}(n,\varepsilon/2;Y)$$ we conclude that
$$h_{\mathrm{pol}}(f;Y)=\lim\limits_{\varepsilon \rightarrow 0}\limsup\limits_{n\rightarrow \infty}\frac{\log \mathrm{span}(n,\varepsilon;Y)}{\log n}=\lim\limits_{\varepsilon \rightarrow 0}\limsup\limits_{n\rightarrow \infty}\frac{\log \mathrm{cov}(n,\varepsilon;Y)}{\log n}.$$

If $X=Y$ we abbreviate $h_{\mathrm{pol}}(f):=h_{\mathrm{pol}}(f;X)$, $\mathrm{span}(n,\varepsilon):=\mathrm{span}(n,\varepsilon;X)$ etc. We list some properties of the polynomial entropy that are important for our computations (for proofs see Propositions $1-4$ in \cite{M2}):
\begin{itemize}
\item[(1)] $h_{\mathrm{pol}}(f^k)=h_{\mathrm{pol}}(f)$, for any $k\geqslant 1$.
\item[(2)] If $Y\subset X$ is a closed, $f$-invariant set, then $h_{\mathrm{pol}}(f;Y)=h_{\mathrm{pol}}(f|_Y)$.
\item[(3)] If $Y=\bigcup_{j=1}^mY_j$ where $Y_j$ are $f$-invariant, then $h_{\mathrm{pol}}(f;Y)=\max\{h_{\mathrm{pol}}(f;Y_j)\mid j=1,\ldots,m\}$\label{finite-union}.\label{(3)}
\item[(4)] If $f:X\to X$, $g:Y\to Y$ and $f\times g:X\times Y\to X\times Y$ is defined as $f\times g (x,y):=(f(x),g(y))$, then $h_{\mathrm{pol}}(f\times g)=h_{\mathrm{pol}}(f)+h_{\mathrm{pol}}(g)$.
\item[(5)] $h_{\mathrm{pol}}(f)$ does not depend on a metric but only on the induced topology.
\item[(6)] $h_{\mathrm{pol}}(\cdot)$ is a \textit{conjugacy invariant} (meaning if $f:X\to X$, $g:X'\to X'$, $\varphi:X\to X'$ is a homeomorphism of compact spaces and $g\circ\varphi=\varphi\circ f$, then $h_{\mathrm{pol}}(f)=h_{\mathrm{pol}}(g)$).
\item[(7)] If $f:X\to X$ and $g:X'\to X'$ are \textit{semi-conjugated}, meaning that $\varphi:X\to X'$ is a continuous surjective map of compact spaces and $g\circ\varphi=\varphi\circ f$, then $h_{\mathrm{pol}}(f)\geqslant h_{\mathrm{pol}}(g)$.
\end{itemize}

A set $A\subset X$ is \textit{wandering} if $f^n(A)\cap A=\emptyset$, for all $n\geqslant 1$. A point $p\in X$ is \textit{wandering} if there exists a wandering neighbourhood $U\ni p$. \label{wan-pt}

A point that is not wandering is said to be \textit{non-wandering}. We denote the set of all non-wandering points by $\mathrm{NW}(f)$. The set $\mathrm{NW}(f)$ is closed and $f$-invariant. Also, we denote the set of all fixed points by $\mathrm{Fix}(f)$ and the set of all periodic points by $\mathrm{Per}(f)$.

Recall that $f:X\to X$ is equicontinuous if for all $\varepsilon>0$ there is a $\delta>0$ such that if $x,y\in X$, $d(x,y)<\delta$, then $d(f^n(x),f^n(y))<\varepsilon$, for all $n\in\mathbb{Z}$. For an equicontinuous homeomorphism on a compact metric space there is a compatible metric that makes it an isometry - $d'(x,y)=\sup_{n\in\mathbb{Z}}d(f^n(x),f^n(y))$. Therefore, we can conclude that the following holds
\begin{prop}\label{equi}
Let $f:X\to X$ be an equicontinuous homeomorphism on a compact metric $X$. Then $h_{\mathrm{pol}}(f)=0$.
\end{prop}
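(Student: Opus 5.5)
The plan is to replace the metric by the metric $d'$ in which $f$ acts isometrically, and then to read off the polynomial entropy from separated sets. Property (5) lets us change to any compatible metric. The paper defines
$$d'(x,y)=\sup_{n\in\mathbb{Z}}d(f^n(x),f^n(y)),$$
and I would begin by checking that $d'$ really is compatible with the topology of $X$. Clearly $d'\geq d$, so $d'$-convergence implies $d$-convergence. For the converse, let $\varepsilon>0$ and take the $\delta$ supplied by equicontinuity. If $d(x,y)<\delta$, then $d(f^n(x),f^n(y))<\varepsilon$ for every $n\in\mathbb{Z}$, hence $d'(x,y)\leq\varepsilon$. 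Thus $d$ and $d'$ generate the same topology, and $d'$ is finite because $X$ is compact.

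Next I would check that $f$ is an isometry for $d'$. Shifting the index gives
$$d'(f(x),f(y))=\sup_{n}d(f^{n+1}(x),f^{n+1}(y))=d'(x,y),$$
since $n\mapsto n+1$ is a bijection of $\mathbb{Z}$. Consequently, for the dynamic metric built from $d'$,
$$(d')^f_n(x,y)=\max_{0\leq k\leq n-1}d'(f^k(x),f^k(y))=d'(x,y)$$
for every $n$.

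It follows that an $(n,\varepsilon)$-separated set with respect to $d'$ is just an $\varepsilon$-separated set in $(X,d')$. By compactness, every $\varepsilon$-separated set in $(X,d')$ has cardinality at most some $N(\varepsilon)<\infty$, independent of $n$. Therefore
$$\limsup_{n\to\infty}\frac{\log\mathrm{sep}(n,\varepsilon)}{\log n}\leq\limsup_{n\to\infty}\frac{\log N(\varepsilon)}{\log n}=0$$
for each $\varepsilon>0$. Letting $\varepsilon\to0$ gives $h_{\mathrm{pol}}(f)=0$ computed with $d'$, and by property (5) the same value holds for the original metric.

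There is no real obstacle here. The only point that needs care is the compatibility of $d'$, which is exactly where equicontinuity over all of $\mathbb{Z}$ is used, and that is handled in the first step.
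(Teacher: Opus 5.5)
Your proof is correct and follows the paper's own argument. You pass to the compatible metric $d'(x,y)=\sup_{n\in\mathbb{Z}}d(f^n(x),f^n(y))$, in which $f$ is an isometry, and invoke metric-independence (property (5)); you also fill in the details the paper leaves implicit --- compatibility of $d'$ via two-sided equicontinuity, and the fact that every dynamic metric coincides with $d'$, so $\mathrm{sep}(n,\varepsilon)$ is bounded independently of $n$.
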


\subsection{Hyperspaces and induced maps}

For a compact metric space $(X,d)$, the hyperspace $2^X$ is the set of all nonempty closed subsets of $X$. The topology on $2^X$ is induced by the Hausdorff metric
$$d_H(A,B):=\inf\{\varepsilon>0\mid A\subset U_\varepsilon(B),\;B\subset U_\varepsilon(A)\},$$
where
\begin{equation}\label{eq:neighb}
U_\varepsilon(A):=\{x\in X\mid d(x,A)<\varepsilon\}.
\end{equation}
We consider a closed subspace $F_n(X)$ of $2^X$, $n\geqslant 1$, consisting of all finite nonempty subsets of cardinality at most $n$, with the induced metric. The set $F_n(X)$ is called the \textit{$n$-fold symmetric product of $X$}. Both spaces $2^X$ and $F_n(X)$ are compact metric spaces with respect to the Hausdorff metric.

If $f:X\to X$ is continuous, then it induces continuous maps
 $2^f:2^X\to 2^X$ and $F_n(f):F_n(X)\to F_n(X)$, by:
 $$2^f(A):=\{f(x)\mid x\in A\}.$$
 If $f$ is a homeomorphism, then this is also true for both $2^f$ and $F_n(f)$.

Let $X^{\times n}$ denote $X\times\ldots\times X$. For a dynamical system $f$ on $X$, the induced symmetric product map $F_n(f)$ is a factor of the induced product system $f\times\ldots\times f$, denoted by $f^{\times n}$ and defined by:
$$f^{\times n}:X^{\times n}\to X^{\times n},\quad f^{\times n}(x_1,\ldots,x_n):=(f(x_1),\ldots,f(x_n)).$$
Indeed, if we define
$$\pi_n:X^{\times n}\to F_n(X),\quad \pi_n:(x_1,\ldots,x_n)\mapsto\{x_1,\ldots,x_n\},$$ one easily checks that $\pi_n$ is onto, continuous and that $\pi_n\circ f^{\times n}=F_n(f)\circ\pi_n$. Therefore, it always holds
$$h_{\mathrm{pol}}(F_n(f))\leqslant h_{\mathrm{pol}}(f^{\times n}).$$

\section{Bounds for polynomial entropy on regular curves}

Following the results from Kato's paper \cite{HK} for topological entropy, we can obtain an upper bound for polynomial entropy of a continuous map on a regular curve. We will give a brief sketch of the proof and formulate our first main result. \\
Let $X$ be a compact metric space. Then $\mathrm{Comp}(X)$ denotes the set of all connected components of $X$. As per usual, $|A|$ denotes the cardinality of the set $A$. Let $f:X\to X$ be a continuous map on a compact space $X$ and $n\in\mathbb{N}$. We define:
$$\varphi(f,n)=\sup_{x\in X}\{|\mathrm{Comp}(f^{-n}(x))|\}.$$
Note that if $f:X\to X$ is a homeomorphism, then $\varphi(f,n)=1$. We can now formulate our main result:
\begin{thrm}\label{kato}
  Let $f:X\to X$ be a continuous on a regular curve $X$. Then $$h_{\mathrm{pol}}(f)\leqslant1+\limsup_{n\to\infty}\dfrac{\log{\varphi(f,n)}}{\log{n}}.$$
\end{thrm}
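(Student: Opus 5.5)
I will follow Kato's approach for topological entropy in \cite{HK}, replacing exponential counting by polynomial counting. Fix $\varepsilon>0$. Since $X$ is a regular curve, compactness gives a finite cover of $X$ by open sets $V_1,\dots,V_k$ of diameter $<\varepsilon$ whose boundaries $\partial V_i$ are finite. Let $P=\bigcup_i\partial V_i$, a finite set, say $|P|=p$. Then $X\setminus P$ splits into pieces each lying in some $V_i$, and each component has diameter $<\varepsilon$. I will try to obtain a cover of $X$ by sets of small $d_n^f$-diameter whose number grows like $n\cdot\varphi(f,n)$ up to constants and lower-order terms. Taking logarithms, dividing by $\log n$, and letting $n\to\infty$ and then $\varepsilon\to0$ would then give the bound via the $\mathrm{cov}$ characterization of $h_{\mathrm{pol}}$.

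\textbf{Key construction.} Consider the finite set of ``cut points up to time $n$'',
$$Q_n=\bigcup_{j=0}^{n-1} f^{-j}(P).$$
This set need not be finite, but $f^{-j}(P)$ has at most $p\,\varphi(f,j)$ components. Let $C$ be a component of $X\setminus Q_n$. Then for each $0\le j\le n-1$ the set $f^j(C)$ is connected and misses $P$, so it lies in one component of $X\setminus P$, which has diameter $<\varepsilon$. Hence $C$ has $d_n^f$-diameter $<\varepsilon$ (after passing to closures, $\le\varepsilon$). The points of $Q_n$ themselves are covered by $f^{-j}(x)$ with $x\in P$. Each component $K$ of $f^{-j}(x)$ satisfies $f^j(K)=\{x\}$. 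I must still control $f^i(K)$ for $i\ne j$, and I plan to handle this by additionally pulling back along the components of $X\setminus P$.

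\textbf{Counting step, the main obstacle.} I need to bound the number of components of $X\setminus Q_n$ in terms of the number of components of $Q_n$. For a regular curve, the idea is that removing a closed set with $m$ components from a Peano continuum of this type should produce $O(m)$ relevant pieces. The number of complementary components is not literally bounded, but only finitely many of them have diameter $\ge\delta$ in each dynamical sense. Kato's argument inductively refines: at step $j$, each piece $C$ from step $j-1$ is cut by $f^{-j}(P)\cap C$. The new pieces that need to be distinguished number at most (components of $f^{-j}(P)$ meeting $C$) plus $O(1)$, since $f^j(C)$ is a connected set meeting the finite set $P$, and the components of $X\setminus P$ of diameter $\ge\varepsilon/2$ are finitely many.

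Summing these increments over $j<n$ gives a cover count
$$\mathrm{cov}(n,\varepsilon)\le c_\varepsilon+\sum_{j<n}\bigl(c_\varepsilon+p\,\varphi(f,j)\bigr)\le c'_\varepsilon\, n\max_{j<n}\varphi(f,j).$$
This is additive rather than multiplicative growth, and additivity is exactly what yields exponent $1+\limsup\log\varphi/\log n$; note that $\varphi(f,j)$ is nondecreasing up to constants. The delicate point I expect is to justify that small complementary components can be grouped so that they add only a bounded number of sets per step. I would first try to show this by using that each component of $X\setminus P$ which meets no point of $f^{-j}(P)$ is carried by $f^j$ into a single small piece, so these components can be merged with a neighbouring set of the cover without increasing diameter beyond $2\varepsilon$.
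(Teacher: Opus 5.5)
\textbf{There is a genuine gap: the counting step that carries the theorem is asserted, not proved.} Your preliminary observations are correct: components of $X\setminus P$ have diameter $<\varepsilon$, components of $X\setminus Q_n$ have $d_n^f$-diameter $<\varepsilon$, and $f^{-j}(P)$ has at most $p\,\varphi(f,j)$ components. But the additive bound $\mathrm{cov}(n,\varepsilon)\le c_\varepsilon+\sum_{j<n}(c_\varepsilon+p\,\varphi(f,j))$ does not follow from the mechanism you describe.

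Your refinement creates, for \emph{each} piece $C$ of stage $j-1$, ``(components of $f^{-j}(P)$ meeting $C$) $+\,O(1)$'' new pieces. Summed over all pieces, the $O(1)$ term is proportional to the number $N_{j-1}$ of existing pieces. Also, a single component of $f^{-j}(P)$ (a continuum, not a point) is counted once for every piece it meets, and it can cross $Q_j$ and meet arbitrarily many pieces. The resulting recursion $N_j\le (1+c)N_{j-1}+\cdots$ is multiplicative. It only gives an exponential bound, i.e.\ finite topological entropy, not the exponent $1+\limsup\log\varphi(f,n)/\log n$.

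Counting components of $X\setminus Q_n$ is hopeless anyway: removing one branch point of infinite order from a dendrite already leaves infinitely many. Your merging idea needs a neighbouring set of small $d_n^f$-diameter. The natural candidate, the component $K$ of $f^{-j}(P)$ where the small pieces attach, is exactly the set whose earlier images $f^i(K)$, $i<j$, you say you cannot yet control. Separately, the claim that $\varphi(f,j)$ is nondecreasing up to constants is unjustified. It is also unnecessary: $\sum_{j<n}\varphi(f,j)\le n\max_{j<n}\varphi(f,j)$ together with the paper's $\limsup$ argument via $i_n$ suffices.

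\textbf{How the paper handles it, and the missing idea.} The paper does not reprove this step. It imports, from the proof of Theorem 2.1 in Kato's paper, an $(n,\varepsilon)$-spanning set of cardinality at most $L\sum_{i<n}\varphi(f,i)$, with $L$ depending only on $\varepsilon$, and then only does the logarithmic bookkeeping.

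To make your argument self-contained, use spanning sets with centres \emph{inside} $Q_n$ instead of covers, so complementary components never need to be counted. By boundary bumping (if $P\neq\emptyset$), every component $C$ of $X\setminus Q_n$ satisfies $\overline C\cap Q_n\neq\emptyset$, and $\overline C$ is $d_n^f$-small.

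Pick one point $s_K$ in every component $K$ of $f^{-j}(P)$, $j<n$; this is at most $p\sum_{j<n}\varphi(f,j)$ points. For $x\in Q_n$ with first hitting time $j$ of $P$, let $K$ be the component of $f^{-j}(f^j(x))$ containing $x$, and $D$ the component of $K\setminus Q_j$ containing $x$. Then $\overline D$ is $d_n^f$-small: at times $m<j$ its images lie in the closure of one component of $X\setminus P$, and at times $m\ge j$ they are single points. Either $D=K$, and then $s_K$ works, or $\overline D$ meets $Q_j$ in a point with strictly smaller hitting time, and you repeat.

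The errors along this chain do not accumulate. Fix a time $m$. All chain points with hitting time $>m$ have $m$-th images in the same component $W$ of $X\setminus P$, because $\overline W\setminus W\subset P$ by local connectedness. The next chain point has its $m$-th image in $\overline W$, and all later points share that image. This gives the required bound on $\mathrm{span}$, after which your final logarithmic step goes through.

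For homeomorphisms, which is all the paper's corollaries use, $Q_n$ is finite with at most $np$ points and is itself the spanning set. So your ``main obstacle'' disappears entirely in that case.
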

\begin{proof} For $\varepsilon>0$, choose a finite open cover $\{U_1,\ldots,U_k\}$ such that $\mathrm{diam}(U_j)<\varepsilon$ and the boundary $\partial(U_j)$ is a finite set for each $j$ (this cover exists since $X$ is a regular curve). Let 
$$A_1:=\overline{U}_1,\quad A_{j+1}:=\overline{U_{j=1}}\setminus\bigcup _{i+1}^jU_j,\;\mbox{for}\; j\in\{2,\ldots,k\}.$$ The set $\mathcal{A}:=\{A_1,\ldots,A_k\}$ is a finite closed cover of $X$ such that $\partial(A_j)$ is finite for every $j$. Denote by $L$ the cardinality of the set
$$\bigcup\{\partial(A)\mid A\in\mathcal{A}\}.$$
In the proof of Theorem 2.1 in~\cite{HK}, H. Kato constructed an $(n,\epsilon)$-spanning set with cardinality less than or equal to $L\cdot\sum\limits_{i=0}^{n-1}\varphi(f,i)$, for any $n\in\mathbb{N}$. Note that the constant $L$ does not depend on $n$, but only on $\varepsilon$. For each $n$ choose $i_n$, $0\leqslant i_n\leqslant n-1$ such that $\max{\{\varphi(f,i)\mid i=0,1,\ldots,n-1\}}=\varphi(f,i_n)$. Following the notation from the mentioned Kato's paper let $r(\varepsilon)=\limsup\limits_{n\to\infty}\frac{\mathrm{span}(n,\varepsilon)}{\log{n}}$. Then we have

\begin{align*}
r(\varepsilon)&=\limsup\limits_{n\to\infty}\frac{\log\mathrm{span}(n,\varepsilon)}{\log{n}}\\
&\leqslant\limsup\limits_{n\to\infty}\frac{\log{\left(n\cdot L\cdot \max{\{\varphi(f,i)\mid i=0,1,\ldots,n-1\}}\right)}}{\log{n}}\\
&=\limsup\limits_{n\to\infty}\frac{\log{n}+\log{L}+\log{\varphi(f,i_n)}}{\log{n}}\\
&=1+\limsup\limits_{n\to\infty}\frac{\log{\varphi(f,i_n)}}{\log{n}}\\
&\leqslant1+\limsup\limits_{n\to\infty}\frac{1}{\log{i_n}}\log{\varphi(f,i_n)}\\
&\leqslant 1+\limsup\limits_{n\to\infty}\frac{\log{\varphi(f,n)}}{\log{n}}.
\end{align*}

We can conclude $h_{\mathrm{pol}}(f)=\lim\limits_{\varepsilon\to0}r(\varepsilon)\leqslant1+\limsup\limits_{n\to\infty}\dfrac{\log{\varphi(f,n)}}{\log{n}}$.
  
\end{proof}

\begin{cor}
   Let $f:X\to X$ be a homeomorphism on a regular curve $X$. Then $h_{\mathrm{pol}}(f)\leqslant1$.
\end{cor}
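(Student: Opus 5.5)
This corollary follows directly from Theorem~\ref{kato}. The only extra input is the value of $\varphi(f,n)$ when $f$ is a homeomorphism, so the plan is to compute that and substitute it into the bound.

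First, let $f$ be a homeomorphism. Then every iterate $f^n$ is a homeomorphism, and in particular it is a bijection. Hence for each $x\in X$ the preimage $f^{-n}(x)$ is a single point. A single point is connected, so $|\mathrm{Comp}(f^{-n}(x))|=1$ for every $x$ and every $n\geqslant 0$. This gives $\varphi(f,n)=1$ for all $n$, as already remarked just before Theorem~\ref{kato}.

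Next, apply Theorem~\ref{kato}. A homeomorphism is in particular continuous, and $X$ is a regular curve, so the theorem applies and yields
$$h_{\mathrm{pol}}(f)\leqslant 1+\limsup_{n\to\infty}\frac{\log \varphi(f,n)}{\log n}.$$
Since $\varphi(f,n)=1$, the quotient $\log\varphi(f,n)/\log n$ equals $0$ for every $n\geqslant 2$. Its $\limsup$ is therefore $0$, and the bound becomes $h_{\mathrm{pol}}(f)\leqslant 1$.

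There is no real obstacle; the corollary is immediate once $\varphi(f,n)$ is identified. The one point to check is that the relevant part of the argument for Theorem~\ref{kato} holds for homeomorphisms. Here $\varphi(f,i)=1$ for all $i$, so Kato's spanning-set bound reads $\mathrm{span}(n,\varepsilon)\leqslant L\cdot n$. This gives
$$\limsup_{n\to\infty}\frac{\log \mathrm{span}(n,\varepsilon)}{\log n}\leqslant \limsup_{n\to\infty}\frac{\log L+\log n}{\log n}=1$$
for each fixed $\varepsilon$, because $L$ depends only on $\varepsilon$ and not on $n$. Letting $\varepsilon\to 0$ gives $h_{\mathrm{pol}}(f)\leqslant 1$.
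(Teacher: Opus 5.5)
Your proof is correct and matches the paper's argument: note that $\varphi(f,n)=1$ for a homeomorphism and substitute this into Theorem~\ref{kato}. Your extra direct check via Kato's bound $\mathrm{span}(n,\varepsilon)\leqslant L\cdot n$ is redundant but harmless, and it has the small advantage of avoiding the $i_n$ manipulations in the proof of Theorem~\ref{kato}.
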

\begin{proof}
As we already mentioned, it is clear that when $f:X\to X$ is a homeomorphism, then $\varphi(f,n)=1$. We immediately get:
$$h_{\mathrm{pol}}(f)\leqslant1+\limsup\limits_{n\to\infty}\dfrac{\log{\varphi(f,n)}}{\log{n}}=1.$$
\end{proof}

\begin{cor}\label{cor:reg-cur-W}
   Let $f:X\to X$ be a homeomorphism on a regular curve $X$ with at least one wandering point. Then $h_{\mathrm{pol}}(f)=1$.
\end{cor}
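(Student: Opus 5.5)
The upper bound $h_{\mathrm{pol}}(f)\leqslant 1$ is the previous corollary, so it remains to show $h_{\mathrm{pol}}(f)\geqslant 1$ when $f$ has a wandering point. I would do this directly from the definition with separated sets, as in Labrousse's argument for the interval and circle (\cite{L}). The idea is to find, for some fixed $\varepsilon>0$, $(n,\varepsilon)$-separated sets of cardinality at least of order $n$.

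Let $p$ be a wandering point and $U\ni p$ a wandering neighbourhood. Shrinking $U$, choose a closed ball $B\subset U$ around $p$; it is still wandering. Put $\varepsilon := d(p, X\setminus B)>0$, so the open $\varepsilon$-ball at $p$ lies in $B$. Since $B$ is wandering, the sets $f^k(B)$, $k\geqslant 0$, are pairwise disjoint: if $f^k(B)\cap f^l(B)\neq\emptyset$ with $k<l$, applying $f^{-k}$ gives $B\cap f^{l-k}(B)\neq\emptyset$, which is impossible.

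For each $n$ consider the points $x_j:=f^{-j}(p)$ for $j=0,1,\ldots,n-1$. Take $0\leqslant i<j\leqslant n-1$. Then $f^{j}(x_j)=p$, while $f^{j}(x_i)=f^{j-i}(p)\in f^{j-i}(B)$, which is disjoint from $B$. Hence $d(f^j(x_j),f^j(x_i))\geqslant \varepsilon$. Since $j\leqslant n-1$, this gives $d_n^f(x_i,x_j)\geqslant\varepsilon$. So $\{x_0,\ldots,x_{n-1}\}$ is $(n,\varepsilon)$-separated, and it has $n$ distinct elements because the $f^{-j}(p)$ lie in the pairwise disjoint sets $f^{-j}(B)$. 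Therefore $\mathrm{sep}(n,\varepsilon)\geqslant n$. The same bound holds for every smaller $\varepsilon'$, since $\mathrm{sep}$ is monotone in $\varepsilon$. Hence $\limsup_n \log\mathrm{sep}(n,\varepsilon')/\log n\geqslant 1$ for every small $\varepsilon'$, and so $h_{\mathrm{pol}}(f)\geqslant 1$.

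The only delicate point is turning "wandering point" into a uniform separation constant. This works because the closed ball inside the wandering neighbourhood gives a definite distance $\varepsilon$ from $p$ to every $f^m(B)$ with $m\geqslant 1$, using the disjointness of forward iterates. Invertibility of $f$ is used so that backward orbit points $f^{-j}(p)$ exist; everything else is routine. Combining this lower bound with the previous corollary yields $h_{\mathrm{pol}}(f)=1$.
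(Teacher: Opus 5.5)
Your proposal is correct and follows the paper's proof. The paper also combines the previous corollary with the lower bound $h_{\mathrm{pol}}(f)\geqslant 1$, which it simply cites as Proposition 2.1 of \cite{L}. You instead prove that lower bound directly, with a valid self-contained argument: the backward orbit $\{f^{-j}(p)\}_{0\leqslant j\leqslant n-1}$ of a wandering point is $(n,\varepsilon)$-separated.
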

\begin{proof} 
One inequality is clear because of the previous corollary. Since $f:X\to X$ possesses a wandering point, then according to Proposition 2.1 in \cite{L}, we have that $h_{\mathrm{pol}}(f)\geqslant1$. We conclude that $h_{\mathrm{pol}}(f)=1$.
\end{proof}

\begin{rem}
  Let $X=\mathbb{S}^1$ and $f:\mathbb{S}^1\to\mathbb{S}^1$ an irrational rotation. Since the map $f$ is an isometry, we have that the polynomial entropy is equal to zero. Note that in this case $\mathrm{NW}(f)=\mathbb{S}^1$, so $f$ possesses no wandering points.
\end{rem}

The following proposition is one of the main results of \cite{GC}. We obtain it as a corollary of Theorem~\ref{kato}.
\begin{prop}
Let $f:I\to I$ be a continuous piecewise monotone map. Let $c_n$ be the number of maximal intervals of monotonicity of the $n-$th iterate $f^n$.  Then 
$$h_{\mathrm{pol}}(f)\leqslant1+\frac{\log{c_n}}{\log{n}}.$$
\end{prop}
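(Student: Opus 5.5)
The plan is to apply Theorem~\ref{kato} with $X=I=[0,1]$, which is a regular curve (an arc, hence a graph and a dendrite). That reduces everything to bounding $\varphi(f,n)$ by the lap number $c_n$. The statement should be read with a $\limsup_{n\to\infty}$ on the right-hand side, that is,
$$h_{\mathrm{pol}}(f)\leqslant 1+\limsup_{n\to\infty}\frac{\log c_n}{\log n}.$$
Indeed, the left-hand side does not depend on $n$, while Theorem~\ref{kato} only controls the asymptotics in $n$. So the key step is the inequality $\varphi(f,n)\leqslant c_n$ for every $n\geqslant 1$.

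To prove this, fix $n$ and $x\in I$. Let $I_1,\ldots,I_{c_n}$ be the maximal intervals of monotonicity of $f^n$. These are closed intervals that cover $I$, and consecutive ones share an endpoint.

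On each $I_j$ the map $f^n$ is monotone, though not necessarily strictly monotone. Hence $f^{-n}(x)\cap I_j$ is either empty or a (possibly degenerate) closed interval, since the preimage of a point under a monotone map of an interval is convex. Consequently
$$f^{-n}(x)=\bigcup_{j=1}^{c_n}\bigl(f^{-n}(x)\cap I_j\bigr)$$
is a union of at most $c_n$ connected sets, and so it has at most $c_n$ components. Taking the supremum over $x$ gives $\varphi(f,n)\leqslant c_n$.

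Substituting into Theorem~\ref{kato} and using monotonicity of $\log$ yields
$$h_{\mathrm{pol}}(f)\leqslant 1+\limsup_{n\to\infty}\frac{\log\varphi(f,n)}{\log n}\leqslant 1+\limsup_{n\to\infty}\frac{\log c_n}{\log n}.$$

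The only real obstacle is the convexity claim for fibers on a lap. I would handle it directly from the definition: if $a<b$ lie in $I_j$, $f^n(a)=f^n(b)=x$, and $a<t<b$, then monotonicity on $I_j$ forces $f^n(t)$ to lie between $x$ and $x$, so $f^n(t)=x$.

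One also needs that piecewise monotonicity of $f$ implies $c_n<\infty$, so that the maximal laps of $f^n$ are finitely many and cover $I$. This is standard: $c_n\leqslant c_1^n$, because the laps of $f^n$ refine the pullbacks of the laps of $f$. In any case, finiteness is implicit in the statement.
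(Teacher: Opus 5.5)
Your proposal is correct and follows the paper's route: apply Theorem~\ref{kato} with $X=I$ and bound $\varphi(f,n)\leqslant c_n$ lap by lap. Two of your choices make the paper's argument precise. You argue that the fibres of $f^n$ on each lap are convex, whereas the paper claims $f^n$ is injective on laps, which fails when a lap is not strictly monotone. You also read the right-hand side with a $\limsup_{n\to\infty}$, which the statement needs in order to hold.
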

\begin{proof}
Note that no two different points from the same interval of monotonicity cannot be mapped to a same point. Therefore, we have that $\varphi(f,n)\leqslant c_n$ and consequently 
$$h_{\mathrm{pol}}(f)\leqslant1+\limsup_{n\to\infty}\dfrac{\log{\varphi(f,n)}}{\log{n}}\leqslant1+\frac{\log{c_n}}{\log{n}}.$$
\end{proof}

We proved the following theorem in \cite{DKL}:

\begin{thrm}\label{(6)}
    Let $X$ be a compact metric space and $f:X\to X$ a homeomorphism such that there exists a wandering point $x_0$. Then $h_{\mathrm{pol}}(F_n(f))\ge n$.
\end{thrm}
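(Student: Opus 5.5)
The plan is to produce, for a fixed small $\varepsilon>0$ and every large $m$, an $(m,\varepsilon)$-separated set for $F_n(f)$ of cardinality of order $m^n$. Taking $\log$, dividing by $\log m$ and letting $\varepsilon\to0$ then gives $h_{\mathrm{pol}}(F_n(f))\ge n$. The separated set will consist of $n$-point configurations spread along the orbit of the wandering point $x_0$, indexed by $n$-tuples of times.

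\textbf{Choice of $\varepsilon$.} Let $U$ be a wandering neighbourhood of $x_0$, so the sets $f^k(U)$, $k\in\mathbb{Z}$, are pairwise disjoint (since $f$ is a homeomorphism, $f^j(U)\cap f^k(U)=\emptyset$ for $j\neq k$). Choose $\varepsilon>0$ with $B(x_0,2\varepsilon)\subset U$. Then $d(f^k(x_0),x_0)\ge 2\varepsilon$ for all $k\neq0$.

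\textbf{Construction.} Fix $m$ and a spacing $s\ge1$. For integers $0\le k_1<k_2<\dots<k_n\le m-1$, consider the finite set
\[
A_{(k_1,\dots,k_n)}=\{f^{-k_1}(x_0),\dots,f^{-k_n}(x_0)\}\in F_n(X).
\]
The number of such sets is $\binom{m}{n}\sim m^n/n!$.

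\textbf{Separation.} Take two distinct index sets $K\neq K'$, and let $k$ be an element of the symmetric difference, say $k\in K\setminus K'$. At time $k$ the set $F_n(f)^k(A_K)$ contains $x_0$. Every point of $F_n(f)^k(A_{K'})$ has the form $f^{k-k'}(x_0)$ with $k'\in K'$, so $k-k'\neq0$. Hence each such point is at distance at least $2\varepsilon$ from $x_0$. It follows that
\[
d_H\bigl(F_n(f)^k(A_K),F_n(f)^k(A_{K'})\bigr)\ge 2\varepsilon,
\]
and since $k\le m-1$, the two sets are $(m,\varepsilon)$-separated. Therefore
\[
\mathrm{sep}(m,\varepsilon)\ge\binom{m}{n},
\]
so $\limsup_m \log\mathrm{sep}(m,\varepsilon)/\log m\ge n$, and the bound holds for every small $\varepsilon$. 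No spacing is actually needed, so $s=1$.

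\textbf{Main obstacle.} The only delicate point is the uniform lower bound $d(f^j(x_0),x_0)\ge2\varepsilon$ for all $j\neq0$, including negative $j$. This follows from wandering of $U$ together with invertibility of $f$: if $f^{j}(x_0)\in U$ for some $j<0$, then $x_0\in f^{-j}(U)\cap U$, which contradicts the wandering property.
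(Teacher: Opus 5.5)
Your proof is correct and complete. The $\binom{m}{n}$ configurations $A_K\subset\{x_0,f^{-1}(x_0),\dots,f^{-(m-1)}(x_0)\}$ are pairwise $(m,\varepsilon)$-separated: at a time $k\in K\setminus K'$ one image contains $x_0$, while every point of the other is some $f^{j}(x_0)$ with $j\neq 0$, hence at distance at least $2\varepsilon$ from $x_0$. You rightly derive this from the disjointness of the sets $f^j(U)$, $j\in\mathbb{Z}$. Since $\log\binom{m}{n}/\log m\to n$, the bound follows.

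The paper gives no proof of this statement and only cites \cite{DKL}, so there is no in-paper argument to compare with. Your argument is the $n$-point analogue of Labrousse's orbit-separation argument for $h_{\mathrm{pol}}(f)\ge 1$ (the one invoked in Corollary~\ref{cor:reg-cur-W}), and it proves the statement on its own.
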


When $X$ is a regular curve, we can obtain a stronger result:

\begin{cor}
 Let $f:X\to X$ be a homeomorphism on a regular curve $X$ with at least one wandering point. Then $h_{\mathrm{pol}}(F_n(f))=n$.
\end{cor}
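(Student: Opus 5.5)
The proof has two halves. The lower bound $h_{\mathrm{pol}}(F_n(f))\geqslant n$ is exactly Theorem~\ref{(6)}: a regular curve is a compact metric space, and $f$ is a homeomorphism with a wandering point. So the work is in the upper bound $h_{\mathrm{pol}}(F_n(f))\leqslant n$.

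For the upper bound I will use the semi-conjugacy already set up in the hyperspace subsection. The map $\pi_n:X^{\times n}\to F_n(X)$ is continuous and onto, and satisfies $\pi_n\circ f^{\times n}=F_n(f)\circ\pi_n$. Property (7) therefore gives
$$h_{\mathrm{pol}}(F_n(f))\leqslant h_{\mathrm{pol}}(f^{\times n}).$$
Applying the product formula, property (4), inductively $n-1$ times yields
$$h_{\mathrm{pol}}(f^{\times n})=n\,h_{\mathrm{pol}}(f).$$
Since $X$ is a regular curve and $f$ is a homeomorphism, the first corollary of Theorem~\ref{kato} gives $h_{\mathrm{pol}}(f)\leqslant1$. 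Combining the three facts gives $h_{\mathrm{pol}}(F_n(f))\leqslant n$.

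The only step needing care is the product formula, which in \cite{M2} may be stated as an equality only under some hypothesis. If it turns out to give merely an inequality, note that the inequality $h_{\mathrm{pol}}(f\times g)\leqslant h_{\mathrm{pol}}(f)+h_{\mathrm{pol}}(g)$ is all the upper bound requires. That inequality can be proved directly: with the max metric on the product, a product of $(n,\varepsilon)$-spanning sets is $(n,\varepsilon)$-spanning, so
$$\mathrm{span}(n,\varepsilon;X\times Y)\leqslant \mathrm{span}(n,\varepsilon;X)\cdot \mathrm{span}(n,\varepsilon;Y).$$
Taking logarithms, dividing by $\log n$, and using subadditivity of $\limsup$ gives the bound. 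With the upper and lower bounds together, $h_{\mathrm{pol}}(F_n(f))=n$.
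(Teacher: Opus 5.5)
Your proof is correct and matches the paper's argument. The lower bound comes from Theorem~\ref{(6)}, and the upper bound comes from the factor map $\pi_n$, the product formula $h_{\mathrm{pol}}(f^{\times n})=n\,h_{\mathrm{pol}}(f)$, and $h_{\mathrm{pol}}(f)\leqslant 1$ for homeomorphisms of regular curves. Your observation that only the subadditive inequality $h_{\mathrm{pol}}(f\times g)\leqslant h_{\mathrm{pol}}(f)+h_{\mathrm{pol}}(g)$ is needed, together with its direct spanning-set proof in the max metric, is a valid extra safeguard that the paper does not include.
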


\begin{proof}
 Under these assumptions we have that $h_{\mathrm{pol}}(f)=1$. Since $F_n(f)$ is a factor of $f^{\times n}$, we have that
  $$h_{\mathrm{pol}}(F_n(f))\leqslant h_{\mathrm{pol}}(f^{\times n})=n h_{\mathrm{pol}}(f)=n.$$
 Combining this with Corollary~\ref{(6)}, we obtain that $h_{\mathrm{pol}}(F_n(f))=n$.
\end{proof}

\begin{cor}
 Let $f:X\to X$ be a homeomorphism on a regular curve $X$ with at least one wandering point. Then $h_{\mathrm{pol}}(2^f)=+\infty$.
\end{cor}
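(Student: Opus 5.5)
The idea is to sandwich $2^f$ between the symmetric product maps $F_n(f)$, whose entropy we already know from the preceding corollary. For every $n\geqslant1$ the set $F_n(X)$ is a closed subset of $2^X$. It is also $2^f$-invariant: since $f$ is a homeomorphism (indeed, for any map), $2^f$ sends a set of at most $n$ points to a set of at most $n$ points. The restriction is $2^f|_{F_n(X)}=F_n(f)$.

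The steps, in order, are these.

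\emph{Monotonicity.} First we check that polynomial entropy does not increase when we pass to a subset. For any $Y\subseteq X$ and any map $g$, an $(n,\varepsilon)$-separated set contained in $Y$ is also an $(n,\varepsilon)$-separated set in the whole space. Hence $\mathrm{sep}(n,\varepsilon;Y)\leqslant \mathrm{sep}(n,\varepsilon)$, and therefore $h_{\mathrm{pol}}(g;Y)\leqslant h_{\mathrm{pol}}(g)$ directly from the definition.

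\emph{Restriction.} Next we apply property (2) to the closed, invariant set $F_n(X)\subset 2^X$, with the dynamic metric induced by the Hausdorff metric $d_H$. This gives
$$h_{\mathrm{pol}}(2^f)\geqslant h_{\mathrm{pol}}(2^f;F_n(X))=h_{\mathrm{pol}}(F_n(f)).$$

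\emph{Lower bound and conclusion.} By the previous corollary, $h_{\mathrm{pol}}(F_n(f))=n$. Alternatively, Theorem~\ref{(6)} alone gives $h_{\mathrm{pol}}(F_n(f))\geqslant n$, which already suffices here. So $h_{\mathrm{pol}}(2^f)\geqslant n$ for every $n\in\mathbb{N}$, and letting $n\to\infty$ yields $h_{\mathrm{pol}}(2^f)=+\infty$.

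There is no real obstacle. The only point to check is that the metric on $F_n(X)$ used in Theorem~\ref{(6)} is the restriction of $d_H$. This holds by definition, since $F_n(X)$ carries the induced metric. In any case, property (5) makes the value independent of the compatible metric chosen. Note also that the regular-curve hypothesis is not needed for this lower bound; only the existence of a wandering point is used, through Theorem~\ref{(6)}.
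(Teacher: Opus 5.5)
Your proposal is correct and follows the paper's proof: restrict $2^f$ to the closed invariant subspace $F_n(X)$, where it equals $F_n(f)$, and use $h_{\mathrm{pol}}(F_n(f))\geqslant n$ for every $n$. Your remark is also right that only the lower bound from Theorem~\ref{(6)} is needed. So the regular-curve hypothesis plays no role here; the paper invokes it only through the equality $h_{\mathrm{pol}}(F_n(f))=n$.
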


\begin{proof}
Note that $F_n(X)$ is a closed and $2^f-$invariant subset of $2^X$ and clearly $F_n(f)=2^f\mid_{F_n(X)}$, so
$$h_{\mathrm{pol}}(2^f)\ge h_{\mathrm{pol}}(F_n(f))=n,$$
for every $n\in\mathbb{N}.$
\end{proof}

\section{Polynomial entropy on local dendrites}

Let $X$ be a local dendrite and $f:X\to X$ a homeomorphism. Since local dendrites are regular curves, we have that $h_{\mathrm{pol}}(f)\le 1$, by Theorem \ref{kato}. If $f$ possesses a wandering point, then $h_{\mathrm{pol}}(f)=1$, as we have seen. Let $f:X\to X$ be a homeomorphism without wandering points. By the main result of \cite{N1}, we have that $\mathrm{NW}(f)=\mathrm{Rec}(f)$ for homeomorphisms on regular curves, so in this case, every point in $X$ is actually recurrent, i.e.\ $f$ is pointwise-recurrent. The following result is proved in \cite{SHR} (see Proposition $1.2$):
\begin{prop}\label{PR=EQ}
Every pointwise-recurrent local dendrite map is equicontinuous.
\end{prop}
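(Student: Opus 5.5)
This statement is cited from \cite{SHR} (Proposition 1.2), so here I sketch how one would reprove it rather than derive it from earlier results in this paper. The setting is a local dendrite $X$ and a pointwise-recurrent continuous map $f:X\to X$. The plan has three stages: first show that $f$ is a homeomorphism, then reduce to dendrite-like pieces, and finally prove equicontinuity by arguing that any failure of it would force a non-recurrent point.

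First, pointwise recurrence makes $f$ surjective, since every $x$ lies in $\overline{\{f^n(x)\}_{n\ge1}}\subset f(X)$. For injectivity, suppose $f(x)=f(y)$ with $x\neq y$. Take the arc $[x,y]$ and use that $X$ contains only finitely many circles. The image $f([x,y])$ is a continuum through $f(x)$ that folds back on itself. By a standard argument for dendrite maps, a fold produces a point whose orbit is pushed into one side and never returns near its starting position, contradicting recurrence. The circles would be handled separately, using that only finitely many exist.

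Next comes the reduction. The finite set of circles, the finitely many branch points of order $\ge 3$ lying on circles, and the core graph $G$ formed by them are permuted by $f$. Replacing $f$ by a power $f^k$ (recurrence and equicontinuity both pass between $f$ and $f^k$), we may assume $f$ fixes each circle and each edge of $G$ setwise. On $G$, a pointwise-recurrent homeomorphism of each edge or circle is either periodic or conjugate to a rotation, hence equicontinuous. The complement $X\setminus G$ consists of dendrite branches attached at points of $G$. On these the key input is the dendrite case: a pointwise-recurrent dendrite homeomorphism has all points periodic or lying in a "regular" part where the map acts like an isometry. This rests on the fact that branch points and endpoints of order structure are preserved, so each arc between two branch points is eventually periodic, and recurrence rules out the drift of interior points of such arcs.

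Finally, equicontinuity is proved by contradiction. Suppose there exist $\varepsilon>0$, points $x_m\to x$, and times $n_m$ with $d(f^{n_m}x_m,f^{n_m}x)\ge\varepsilon$. By local arcwise connectedness and uniform local connectivity (smallness of arcs) of local dendrites, the small arcs $[x_m,x]$ have images $f^{n_m}[x_m,x]$ of diameter at least $\varepsilon$. Since only finitely many pairwise disjoint arcs of diameter $\ge\varepsilon$ fit in a local dendrite (the null-family property), injectivity of $f$ forces two iterates of one small arc to overlap in a nested way. That nesting yields a monotone arc map $f^{j}:J\to J'\supsetneq J$ with a non-recurrent point. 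This step is the main obstacle: converting "nested images" into an explicit non-recurrent point when $x$ is a branch point of infinite order or lies near a circle. I would first treat the case where $x$ is an ordinary point (the situation reduces to an interval homeomorphism, where recurrence forces $f^{2}$ to be the identity locally), then handle branch points via the finiteness of branches of diameter $\ge\varepsilon$.
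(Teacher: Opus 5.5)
The paper gives no proof of this statement; it quotes Proposition~1.2 of the cited reference. Your sketch therefore has to stand on its own, and it does not: the step that carries the content of the theorem is missing.

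\textbf{The gaps.} Your final stage uses the fact that a family of pairwise disjoint arcs of diameter $\ge\varepsilon$ in a regular curve is finite. That fact is correct, but it cannot produce the nesting you need. The large arcs $f^{n_m}([x_m,x])$ are images of \emph{different} small arcs, each taken at a single time, so you never get infinitely many large iterates of one arc. Even for a single arc $A$, $f^i(A)\cap f^j(A)\neq\emptyset$ only gives $f^{j-i}(A)\cap A\neq\emptyset$, not $f^{j-i}(J)\supsetneq J$. For example, the involution $t\mapsto -t$ of $[-1,1]$ has overlapping, non-nested iterates of $[0,\delta]$, and every point is periodic.

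The other steps have gaps too:
\begin{itemize}
\item The fallback at ordinary points fails. Branch points of a dendrite can be dense, so an ordinary point need not have an arc neighbourhood. In any case $f^n$ does not map a neighbourhood of $x$ into itself, so there is no interval homeomorphism to which ``$f^2=\mathrm{id}$'' could apply.
\item The dendrite stage (``arcs between branch points are eventually periodic'', ``a regular part where $f$ acts like an isometry'') is not a precise statement.
\item Injectivity is only asserted via an unspecified fold argument. Pointwise recurrence alone does not force it, since non-invertible minimal maps exist. The paper only needs homeomorphisms, but your statement covers all maps, so this step needs a real proof.
\end{itemize}

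\textbf{The missing idea}, for a homeomorphism $f$ of a dendrite $D$:
\begin{enumerate}
\item Fix $c\in\mathrm{Fix}(f)$ and the order $y\preceq z\iff y\in[c,z]$. Since $f([c,z])=[c,f(z)]$, $f$ preserves this order.
\item If $y\neq f^n(y)$ are comparable, the $f^n$-orbit of $y$ is strictly monotone and stays in a closed set missing $y$. So $y$ is not $f^n$-recurrent, hence not $f$-recurrent.
\item Let $x$ be a cut point. Take $y$ in a component $C$ of $D\setminus\{x\}$ missing $c$, and $n\ge1$ with $f^n(y)\in C$. Write $[c,y]\cap[c,f^n(y)]=[c,m]$; then $x\in[c,m]$. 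Both $m$ and $f^n(m)$ lie on $[c,f^n(y)]$, so they are comparable, and step 2 gives $f^n(m)=m$. Applying step 2 to every point of $[c,m]$ gives $f^n=\mathrm{id}$ on $[c,m]$. Hence every cut point is periodic.
\item Now suppose $x_m\to x$ and $d(f^{n_m}(x_m),f^{n_m}(x))\ge\varepsilon$. Pass to a subsequence with $f^{n_m}(x)\to a$ and $f^{n_m}(x_m)\to b$, so $a\neq b$. Using the first-point retraction onto $[a,b]$, the arcs $f^{n_m}([x,x_m])$ eventually all contain a fixed nondegenerate subarc $K\subset[a,b]$.
\item Take distinct interior points $z,z'$ of $K$; they are cut points, hence periodic. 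The points $f^{-n_m}(z)$ and $f^{-n_m}(z')$ lie in finite orbits and in the arcs $[x,x_m]$, whose diameters tend to $0$. So both equal $x$ for large $m$, which contradicts injectivity.
\end{enumerate}

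Your core-graph reduction then handles local dendrites with circles. Either $X$ is a circle and $f$ is conjugate to an irrational rotation, or some power $f^N$ is the identity on the core graph $G$. In the second case the dendrite argument applies to $f^N$ on the invariant dendrites hanging off $G$, and these form a null family, which gives the needed uniformity.
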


Using Proposition \ref{equi} we obtain the following:

\begin{thrm}\label{prop:locden-NW}
 Let $X$ be a local dendrite and $f:X\to X$ a homeomorphism. Then $h_{\mathrm{pol}}(f)\in\{0,1\}$. Moreover $h_{\mathrm{pol}}(f)=1$ if and only if $f$ possesses a wandering point.
\end{thrm}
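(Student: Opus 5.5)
The proof splits according to whether $f$ has a wandering point; the two cases are exhaustive and yield exactly the values $1$ and $0$. Both the dichotomy $h_{\mathrm{pol}}(f)\in\{0,1\}$ and the characterization follow once each case is settled.

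\emph{Case 1: $f$ has a wandering point.} Every local dendrite is a regular curve, as recalled in Subsection~\ref{subsec:reg}. Corollary~\ref{cor:reg-cur-W} therefore applies directly and gives $h_{\mathrm{pol}}(f)=1$. The upper bound comes from Theorem~\ref{kato} with $\varphi(f,n)=1$, and the lower bound from Labrousse's Proposition~2.1 in~\cite{L}.

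\emph{Case 2: $f$ has no wandering point.} Here $\mathrm{NW}(f)=X$. By the result of~\cite{N1}, quoted above, $\mathrm{NW}(f)=\mathrm{Rec}(f)$ for homeomorphisms of regular curves, so every point of $X$ is recurrent and $f$ is pointwise-recurrent. Proposition~\ref{PR=EQ} then says $f$ is equicontinuous. The step I would check most carefully is that the equicontinuity produced there (forward iterates, as a map) matches the two-sided notion used in Proposition~\ref{equi}. I would argue this via compactness: an equicontinuous surjection of a compact metric space is a homeomorphism whose inverse is also equicontinuous. Alternatively, one can observe that $d'(x,y)=\sup_{n\ge0}d(f^n(x),f^n(y))$ is already a compatible metric. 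In that metric $f$ is non-expanding, and a non-expanding surjection of a compact metric space is an isometry. With $f$ an isometry for a compatible metric, $d_n^f=d'$ for every $n$, so $\mathrm{sep}(n,\varepsilon)$ is bounded independently of $n$. Combined with property~(5), Proposition~\ref{equi} gives $h_{\mathrm{pol}}(f)=0$.

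Since the two cases are complementary, this proves $h_{\mathrm{pol}}(f)\in\{0,1\}$. It also proves that $h_{\mathrm{pol}}(f)=1$ exactly when a wandering point exists: if there is none, Case~2 forces the entropy to be $0\neq1$. The main obstacle is the equicontinuity direction issue in Case~2; everything else is an application of results already stated.
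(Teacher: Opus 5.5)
Your proposal is correct and follows the paper's argument: the wandering case via Corollary~\ref{cor:reg-cur-W}, and the wandering-free case via $\mathrm{NW}(f)=\mathrm{Rec}(f)$ from~\cite{N1}, then Proposition~\ref{PR=EQ}, then Proposition~\ref{equi}. Your extra step is valid and fills a detail the paper leaves implicit: you reconcile forward equicontinuity with the two-sided notion by noting that $f$ is a non-expanding surjection for the forward metric $d'$, hence an isometry.
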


\begin{cor}
Let $f:X\to X$ be a homeomorphism without wandering points on a local dendrite $X$. Then $h_{\mathrm{pol}}(F_n(f))=0$.
\end{cor}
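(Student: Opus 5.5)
The plan is to reduce the statement to the equicontinuity of $f$ and then bound the induced map $F_n(f)$ through the product map $f^{\times n}$.

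First, since $f$ has no wandering points and $X$ is a local dendrite, hence a regular curve, the result of \cite{N1} gives $\mathrm{NW}(f)=\mathrm{Rec}(f)=X$. So $f$ is pointwise-recurrent, and Proposition~\ref{PR=EQ} shows that $f$ is equicontinuous. The definition of equicontinuity used here involves all $n\in\mathbb{Z}$. Forward equicontinuity of a homeomorphism of a compact space yields two-sided equicontinuity, and this follows by a standard compactness argument. I expect this to be the only mildly delicate point, and it is essentially already used in the proof of Theorem~\ref{prop:locden-NW}.

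Next, I pass to the product. The map $f^{\times n}$ is an equicontinuous homeomorphism of the compact metric space $X^{\times n}$ with the max metric, because coordinatewise closeness is preserved uniformly in all iterates. Proposition~\ref{equi} then gives $h_{\mathrm{pol}}(f^{\times n})=0$. Alternatively, one can apply the product formula (property (4)) inductively, combined with $h_{\mathrm{pol}}(f)=0$ from Theorem~\ref{prop:locden-NW}, to get $h_{\mathrm{pol}}(f^{\times n})=n\cdot 0=0$.

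Finally, $F_n(f)$ is a factor of $f^{\times n}$ via the map $\pi_n$. Property (7) therefore gives
$$0\le h_{\mathrm{pol}}(F_n(f))\le h_{\mathrm{pol}}(f^{\times n})=0,$$
which is the claim. As an alternative to the last step, one could show directly that $F_n(f)$ is equicontinuous in the Hausdorff metric, since $d_H(f^k(A),f^k(B))<\varepsilon$ whenever $d_H(A,B)<\delta$, and then apply Proposition~\ref{equi}.
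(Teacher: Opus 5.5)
Your proof is correct and matches the paper's argument. The paper simply writes $h_{\mathrm{pol}}(F_n(f))\le h_{\mathrm{pol}}(f^{\times n})=n\,h_{\mathrm{pol}}(f)=0$, taking $h_{\mathrm{pol}}(f)=0$ from Theorem~\ref{prop:locden-NW}; this is your product-formula alternative combined with the factor map $\pi_n$, and your remarks on two-sided equicontinuity and the direct Hausdorff-metric argument are valid but not needed here.
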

\begin{proof}
Since 
$$h_{\mathrm{pol}}(F_n(f))\le h_{\mathrm{pol}}(f^{\times n})=nh_{\mathrm{pol}}(f)=0,$$ 
we have that $h_{\mathrm{pol}}(F_n(f))=0$.
\end{proof}

\begin{cor}
Let $f:X\to X$ be a homeomorphism without wandering points on a local dendrite $X$. Then $h_{\mathrm{pol}}(2^f)=0$.
\end{cor}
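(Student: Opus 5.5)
The plan is to show that the induced map $2^f$ is equicontinuous. Its entropy then vanishes by Proposition~\ref{equi}, applied on the compact metric space $(2^X,d_H)$.

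First I justify that equicontinuity of $2^f$ is available. Since $f$ has no wandering points, every point is non-wandering. By the result of \cite{N1} for homeomorphisms on regular curves, $\mathrm{NW}(f)=\mathrm{Rec}(f)$, so $f$ is pointwise-recurrent. By Proposition~\ref{PR=EQ}, $f$ is then equicontinuous.

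Next I would avoid the limiting argument through $F_n(X)$. That route fails: $\bigcup_n F_n(X)$ is dense in $2^X$, but the finite union property does not pass to countable unions or closures. So I use equicontinuity directly. As recalled before Proposition~\ref{equi}, the metric
$$d'(x,y)=\sup_{n\in\mathbb{Z}}d(f^n(x),f^n(y))$$
is compatible with the topology of $X$, and $f$ is an isometry for $d'$.

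The key step is to check that the Hausdorff metric $d'_H$ induced by $d'$ makes $2^f$ an isometry of $2^X$. Since $f$ is a $d'$-isometric bijection, $f(U_\varepsilon(A))=U_\varepsilon(f(A))$ for the $d'$-neighbourhoods. Hence $A\subset U_\varepsilon(B)$ holds if and only if $f(A)\subset U_\varepsilon(f(B))$, and likewise with $A$ and $B$ swapped. It follows that $d'_H(2^f(A),2^f(B))=d'_H(A,B)$. The inverse $2^{f^{-1}}$ behaves the same way, so $2^f$ is a homeomorphism and an isometry.

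It remains to confirm that $d'_H$ induces the same topology on $2^X$ as $d_H$. This is standard: the Vietoris topology on $2^X$ depends only on the topology of $X$, and for any compatible metric on a compact space the induced Hausdorff metric generates the Vietoris topology. By property (5), polynomial entropy depends only on the topology, so I may compute $h_{\mathrm{pol}}(2^f)$ with $d'_H$.

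Under $d'_H$ the dynamic metric $(d'_H)^{2^f}_n$ coincides with $d'_H$ for every $n$. Therefore $\mathrm{sep}(n,\varepsilon)$ is bounded independently of $n$, by compactness of $2^X$. This gives $h_{\mathrm{pol}}(2^f)=0$; equivalently, I can invoke Proposition~\ref{equi} directly for the isometry $2^f$.

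The only step needing care is the topological equivalence of $d_H$ and $d'_H$. If a direct check is wanted, it follows from uniform equivalence: since $X$ is compact, the identity $(X,d)\to(X,d')$ is uniformly continuous in both directions, and the Hausdorff construction preserves uniform equivalence of metrics.
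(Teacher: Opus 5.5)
Your argument is correct and follows the paper's route: no wandering points gives pointwise recurrence, hence $f$ is equicontinuous by Proposition~\ref{PR=EQ}, hence $2^f$ is equicontinuous and $h_{\mathrm{pol}}(2^f)=0$. The only difference is that the paper cites the equicontinuity of $2^f$ as a known fact, while you verify it directly by showing that $2^f$ is an isometry for the Hausdorff metric $d'_H$ built from the invariant metric $d'$, which correctly fills in that step.
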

\begin{proof}
Since $f$ possesses no wandering points, using Proposition \ref{PR=EQ}, we have that $f$ is equicontinuous. It is a known fact that then $2^f$ is also equicontinuous, so we have that $h_{\mathrm{pol}}(2^f)=0$.
\end{proof}

We end this paper with some interesting questions.\\

\textbf{Question1}: Are there any regular curves, which are not local dendrites, such that polynomial entropy of homeomorphisms on them is not equal to $0$ or $1$?

\textbf{Question 2}: Does the upper bound for polynomial entropy we proved to be equal to $1$ for regular curves stands for rational curves?

\end{document}